\documentclass[a4paper,12pt]{article}

\usepackage[utf8]{inputenc}
\usepackage{amstext}
\usepackage{amsfonts}
\usepackage{amsthm}
\usepackage{textcomp}
\usepackage{amssymb}
\usepackage{amscd}
\usepackage{amsmath}
\usepackage{xcolor}
\usepackage{tikz-cd}
\usepackage{caption}
\usepackage{enumitem}
\usepackage{theoremref}
\usepackage{mathtools}
\usepackage{float}
\setlist[enumerate]{label=({\arabic*})}

% Thm environments------------------------ 
\newtheorem{defn}{Definition}[subsection]
\newtheorem{thm}[defn]{Theorem}

\newtheorem{lem}[defn]{Lemma}

\newtheorem{prop}[defn]{Proposition}

\newcommand{\Z}{\mathbb{Z}}

\date{\today}
\begin{document}
\title{On spaceability of shifts-like operators on $L^p$}

\author{Emma D'Aniello and Martina Maiuriello}

\newcommand{\Addresses}{{
  \bigskip
  \footnotesize
  
    E.~D'Aniello,\\
  \textsc{Dipartimento di Matematica e Fisica,\\ Universit\`a degli Studi della Campania ``Luigi Vanvitelli",\\
  Viale Lincoln n. 5, 81100 Caserta, ITALIA}\\
  \textit{E-mail address: \em emma.daniello@unicampania.it} \\
  
    M.~Maiuriello,\\
  \textsc{Dipartimento di Matematica e Fisica,\\ Universit\`a degli Studi della Campania ``Luigi Vanvitelli",\\
  Viale Lincoln n. 5, 81100 Caserta, ITALIA}\\
  \textit{E-mail address: \em martina.maiuriello@unicampania.it} 

}}

\maketitle

\begin{abstract}
We prove the spaceability of the set of hypercyclic vectors for {\em shifts-like operators}. Shift-like operators appear naturally as composition operators on $L^p(X)$, when the underlying space $X$ is dissipative. In the process of proving the main theorem, we provide, among other results of independent interest, a characterization of weakly mixing dissipative composition operators of bounded distortion.
\end{abstract}

\let\thefootnote\relax\footnote{\date{\today} \\
2010 {\em Mathematics Subject Classification:} Primary: 46B87, 47A16, 37D99 Secondary: 47B37.\\
{\em Keywords:} Hypercyclicity, Spaceability, Composition Operators, Weighted Shifts.}
%\newpage
\tableofcontents
\newpage

%##############################
\section{Introduction}
%##############################

\indent
During the last decades the study of hypercyclic operators on separable Banach spaces has developed into a very active research area. Its roots can be found in various areas of mathematics like, to name a few, Operator Theory, e.g., through the invariant subspace problem; Dynamical Systems, e.g., topological dynamics and ergodic theory; Semigroups of operators with applications to PDEs. In particular, hypercyclicity reveals its charm in Linear Dynamics, where it is one of the main ingredients in the most widely known definition of chaos, the chaos according to Devaney, and it is a natural strengthening of the weaker notion of chaos according to Li-Yorke. Hence, the investigations into hypercyclicity have shown, over the years, that this notion has been at the crossroads of several areas of mathematics, taking its examples and techniques from various domains. This central position has led, as a natural consequence, to the analysis of the vastness of the hypercyclic phenomenon or, in other words, of the ``size'' of the set of hypercyclic vectors. In this regards, it is already known, by the Birkhoff Transitivity Theorem, that hypercyclic vectors form a dense $G_{\delta}$-set, hence residual, meaning that hypercyclicity is a generic phenomenon \cite[Proposition 2.19]{GE}. More is known: given a hypercyclic operator on a certain separable Banach space, surprisingly, every vector of the space can be written as the sum of two different hypercyclic vectors \cite[Proposition 2.52]{GE}. Continuing this line of research, at the beginning of the 21st century, different concepts of largeness were presented: lineability, dense-lineability and spaceability appeared in the literature, meaning, respectively, existence of large vector spaces, of dense vector spaces and large closed vector spaces. Relations among the mentioned notions are completely understood: dense-lineability and spaceability are both stronger than lineability, while there is no relation between spaceability and dense-lineability \cite{aron2009dense}.
Motivated by the above, special attention has recently been devoted to these notions restricted to the set of hypercyclic vectors. In this sense, a number of results due to Herrero, Bourdon, Bes and Wengenroth evolved along the papers \cite{Herrero, Bourdon, Bes, Wengenroth}, proves that the set of hypercyclic vectors of a hypercyclic operator on an arbitrary vector space, is dense-lineable. In 1996, in the setting of Banach spaces, a criterium for spaceability, i.e., for the existence of closed subspaces, within the set of hypercyclic vectors is provided in \cite{Montes}. The monographs \cite{aronlineability} and \cite{GE} are exhaustive collections of recent results about hypercyclicity, (dense-)lineability and spaceability, in which several examples and applications can be found. \\
It is clear to anyone who handles a bit of Operator Theory and Linear Dynamics, that every new notion is first tested on weighted backward shifts. It turns out that the set of hypercyclic vectors of a weighted backward shift is always (dense-)lineable both in the bilateral and unilateral case \cite{Menet1, Bernal, Bourdon, Herrero} and, moreover, it is always spaceable for bilateral weighted shifts while, for the unilateral ones, characterizations are provided in \cite{Menet1}. The intimate relation, investigated in \cite{DAnielloDarjiMaiuriello2}, between weighted backward shifts and a large natural class of operators on $L^p$, $1 \leq p < \infty$, called ``shift-like operators'',  raises the natural question of how the set of hypercyclic vectors is structured in the case of the latter operators. Taking into account that shift-like operators appear naturally as dissipative composition operators, the above question naturally opens the door to the analysis of lineability and spaceability in the context of composition operators. \\
Motivated by the above, in this paper we settle the above question. It is organized as follows. Section 2 provides preliminary definitions and backgrounds. Section 3 attacks the above question in the dissipative context of bounded distortion, by providing a characterization of weakly mixing composition operators first, and analyzing the (dense-)lineability and spaceability within the set of hypercyclic vectors of composition operators, then.

%##############################
\section{Definitions and Background Results}
%##############################
As usual, ${\mathbb N}$ denotes the set of all positive integers and ${\mathbb N}_0={\mathbb N}\cup \{0\} .$ 
Recall that two linear operators $T:X \rightarrow X$ and $S:Y \rightarrow Y$ are said to be {\em topologically semi-conjugate} if there exists a linear, bounded, surjective map $\Pi: X \rightarrow Y$, called {\em factor map,} for which $\Pi \circ T=S\circ \Pi$. In such case, $S$ is called a {\em factor} of $T$. In particular, if $\Pi$ is a homeomorphism, $S$ and $T$ are said to be {\em topologically conjugate}.

%%%%%%%%%%%%%%%%%%%%%
\subsection{Weighted shifts and composition operators}
%%%%%%%%%%%%%%%%%%%%%

\begin{defn}[Linear composition operator] \label{compodyn}
Let $(X,{\mathcal B},\mu)$ be a $\sigma$-finite measure space and $f : X \to X$ be an injective bimeasurable transformation satisfying
\begin{equation}\label{condition}
 \exists c>0 \ \ : \ \  \mu(f^{-1}(B)) \leq c \mu(B) \ \textrm{ for every } B \in {\mathcal B}.
   \tag{$\star$}
\end{equation}
For $1 \le p <\infty$, the {\em composition operator induced by}\index{composition!operator} $f$ is the bounded linear operator 
\begin{align*}
    T_f \colon L^p(X,{\mathcal B},\mu) &\rightarrow L^p(X,{\mathcal B},\mu) \\
    \varphi &\mapsto \varphi \circ f . 
    \end{align*}
\end{defn}

In the sequel, we assume $f$ surjective and $f^{-1}$ satisfying ($\star$), implying that $T_f$ is invertible with $T_f^{-1}=T_{f^{-1}}$. The following two definitions are provided in \cite{DAnielloDarjiMaiuriello}.

\begin{defn}[Dissipative composition operators]
In the previous definition, if, in addition, there exists a wandering set $W \in \mathcal B$ such that 
\[X = \displaystyle{ {\dot \bigcup_{k \in \Z}} f^k (W)},\] where the symbol $\dot {\cup}$ denotes the disjoint union, then the composition operator $T_f$ is said to be {\em{dissipative}}.
\end{defn}

\begin{defn}[Bounded distortion]
Given a dissipative composition operator $T_f$, if in addition there exists $K>0$ such that
\begin{equation*}
 \dfrac{1}{K} \mu(f^k(W))\mu(B) \leq \mu(f^k (B))\mu (W) \leq K \mu(f^k(W))\mu(B), \tag{$\Diamond$}\label{eq:conditionbd}
\end{equation*}
for all $k \in \mathbb Z$ and  $B \in {\mathcal B}(W)$, where  ${\mathcal B}(W) =\{ B \cap W, B \in {\mathcal B} \},$ then $T_f$ is said to be a dissipative composition operator of {\em{bounded distortion}}.
\end{defn}

A special type of composition operators are the weighted shifts, the definition of which is recalled here.

\begin{defn} 
Let $A= {\mathbb Z}$ or $A = {\mathbb N}$. Let $X=\ell^p(A)$, $1 \leq p < \infty$ or $X=c_0(A).$ Let  $w=\{w_n\}_{n \in A}$ be a bounded sequence of scalars, called {\em weight sequence}.  Then, the {\em weighted backward shift $B_w$ on $X$} is defined by \[B_w(\{x_n\}_{n \in A}) =\{w_{n+1}x_{n+1}\}_{n \in  A}.\] If $A= {\mathbb Z}$, the shift is called {\em bilateral}. If $A = {\mathbb N}$, then the shift is {\em unilateral}. 
\end{defn}

It is proved in \cite{DAnielloDarjiMaiuriello} that the relation between composition operators and weighted shifts is deeper in the dissipative setting with bounded distortion, as the following result shows.

\begin{lem} \cite[Lemma 4.2.3]{DAnielloDarjiMaiuriello} \label{factorBw}
Let $T_f$ be a dissipative composition operator of bounded distortion, generated by a wandering set $W$. Consider the weighted backward shift $B_w$ on $\ell^p(\Z)$ with weights
\[w_{k} =  \left( \frac{\mu(f^{k-1}(W))}{\mu(f^{k}(W))}\right)^{\frac{1}{p}} . \] 
Then, $B_w$ is a factor of $T_f$ by the factor map $\Pi: L^p(X) \rightarrow \ell ^p({\mathbb Z}) $ defined as $\Pi(\varphi)= \{x_{k}\}_{k \in {\mathbb Z}}, $ where 
 \[x_{k} = \dfrac{\mu(f^{k}(W))^{\frac{1}{p}}}{\mu(W)} \int_{W}  \varphi \circ f^{k} d \mu, \ \ k \in \Z.  \]
\end{lem}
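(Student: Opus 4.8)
The plan is to verify directly that $\Pi$ satisfies the four defining properties of a factor map: linearity, boundedness, surjectivity, and the intertwining relation $\Pi \circ T_f = B_w \circ \Pi$. Linearity is immediate, since each coordinate $x_k$ depends linearly on $\varphi$ through the integral.

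For boundedness, I would estimate $\|\Pi(\varphi)\|_{\ell^p}^p = \sum_{k\in\Z}\frac{\mu(f^k(W))}{\mu(W)^p}\bigl|\int_W \varphi\circ f^k\,d\mu\bigr|^p$. Applying H\"older's inequality on $W$ with conjugate exponent $q$ gives $\bigl|\int_W \varphi\circ f^k\,d\mu\bigr|^p \le \mu(W)^{p-1}\int_W |\varphi\circ f^k|^p\,d\mu$. The key reduction is then to pass from the integral of $|\varphi\circ f^k|^p$ over $W$ to an integral of $|\varphi|^p$ over $f^k(W)$. By the change-of-variables formula, $\int_W |\varphi\circ f^k|^p\,d\mu = \int_{f^k(W)}|\varphi|^p\,d\nu$, where $\nu$ is the push-forward measure defined by $\nu(E) = \mu(f^{-k}(E))$ for $E\in\mathcal{B}(f^k(W))$. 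Applying the bounded distortion hypothesis (\ref{eq:conditionbd}) with $B = f^{-k}(E) \in \mathcal{B}(W)$ (so that $f^k(B)=E$) yields the domination $\nu(E) \le K\,\frac{\mu(W)}{\mu(f^k(W))}\,\mu(E)$. Combining these estimates, the factors $\mu(f^k(W))$ and $\mu(W)$ cancel and one obtains $|x_k|^p \le K\int_{f^k(W)}|\varphi|^p\,d\mu$; summing over $k$ and using the disjoint decomposition $X = \dot\bigcup_{k} f^k(W)$ gives $\|\Pi(\varphi)\|_{\ell^p}^p \le K\|\varphi\|_p^p$. This measure-theoretic step, where bounded distortion is genuinely needed to control the distortion of $\mu$ under iterates of $f$, is the main obstacle; everything else is essentially formal.

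For surjectivity, given $\{y_k\}_{k\in\Z}\in \ell^p(\Z)$ I would construct an explicit preimage by letting $\varphi$ be constant equal to $c_k = y_k/\mu(f^k(W))^{1/p}$ on each piece $f^k(W)$. Since $\varphi\circ f^k \equiv c_k$ on $W$, a direct computation gives $x_k = c_k\,\mu(f^k(W))^{1/p} = y_k$, so $\Pi(\varphi) = \{y_k\}$, while $\|\varphi\|_p^p = \sum_k |c_k|^p\mu(f^k(W)) = \sum_k |y_k|^p < \infty$ confirms $\varphi\in L^p(X)$.

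Finally, for the intertwining identity I would compute both sides coordinatewise. On one hand, $\Pi(T_f\varphi)$ has $k$-th coordinate $\frac{\mu(f^k(W))^{1/p}}{\mu(W)}\int_W \varphi\circ f^{k+1}\,d\mu$, since $(\varphi\circ f)\circ f^k = \varphi\circ f^{k+1}$. On the other hand, the $k$-th coordinate of $B_w\Pi(\varphi)$ is $w_{k+1}x_{k+1}$, and substituting $w_{k+1} = (\mu(f^k(W))/\mu(f^{k+1}(W)))^{1/p}$ together with the formula for $x_{k+1}$ produces exactly the telescoping cancellation $(\mu(f^k(W))/\mu(f^{k+1}(W)))^{1/p}\cdot\mu(f^{k+1}(W))^{1/p} = \mu(f^k(W))^{1/p}$, yielding the same expression. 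Hence $\Pi\circ T_f = B_w\circ\Pi$, and the precise choice of weights $w_k$ is seen to be exactly what makes this cancellation work.
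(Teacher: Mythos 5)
Your proposal is correct and complete: the paper itself states this lemma without proof (citing Lemma 4.2.3 of the earlier work of D'Aniello, Darji and Maiuriello), and your direct verification of linearity, boundedness, surjectivity and the intertwining identity is exactly the natural argument behind it. In particular, your key estimate is sound: pushing $\mu|_W$ forward under $f^k$ and dominating it via the left-hand inequality of ($\Diamond$) applied to $B=f^{-k}(E)$ gives $\vert x_k\vert^p \le K\int_{f^k(W)}\vert\varphi\vert^p\,d\mu$, whence $\Vert\Pi\Vert\le K^{1/p}$ after summing over the disjoint decomposition, while your explicit piecewise-constant preimage even exhibits a bounded linear right inverse of $\Pi$.
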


In \cite{DAnielloDarjiMaiuriello2}, using the above result, it is proved that dissipative composition operators of bounded distortion behave as bilateral weighted shifts. \\
For this reason, the authors call the dissipative composition operators of bounded distortion, {\em{shifts-like operators}.} For a detailed presentation of the above, and related, topics see \cite{Maiuriello}, which is also a good source of updated literature.\\
 In the sequel, the spaces $L^p(X,{\mathcal B},\mu)$ will be simply denoted by $L^p(X)$, $1 \leq p < \infty$.

%@@@@@@@@@@@@@@@@@@
\subsection{Hypercyclicity and weak mixing}
%@@@@@@@@@@@@@@@@@@
Given a complex Banach space $X$, from now on $T:X \rightarrow X$ denotes a bounded linear operator on $X$, and the space $X$ is always assumed to be separable.
 
\begin{defn}
The operator $T$ is said to be
\begin{itemize}
\item{\em hypercyclic} if it admits a {\em hypercyclic vector,} i.e., if there exists $x \in X$ such that $Orb(x,T)$ is dense in $X$;
\item{\em weakly mixing} if the operator $T\oplus T : (x,y) \in X \times X \mapsto (Tx,Ty) \in X \times X$ is hypercyclic.
\end{itemize}
\end{defn}

\noindent
In the sequel, $HC(T)$ denotes the set of hypercyclic vectors of the operator $T$.
There are no hypercyclic operators on finite-dimensional spaces \cite{Kitai, Rolewicz}. The set $HC(T)$ is residual in $X$, meaning that hypercyclicity is a generic phenomenon \cite[Theorem 2.19]{GE}. Weak mixing implies hypercyclicity, but the implication cannot be reverted.

\noindent
The next fundamental result, known as the Gethner–Shapiro Criterion, will be applied in the sequel.

\begin{thm}\label{GS} \cite[Theorem 3.10]{GE} Given the operator $T$, if there are dense subsets $X_0,Y_0 \subset X$, an increasing sequence $\{n_k\}_{k \geq 1}$ of positive integers, and $S:Y_0 \rightarrow Y_0$ such that, for any $x\in X_0$, $y\in Y_0$,
\begin{itemize}
\item[(a)] $T^{n_k}x\rightarrow 0$
\item[(b)] $S^{n_k}y \rightarrow 0$
\item[(c)] $TSy=y$
\end{itemize}
then $T$ is weakly mixing.
\end{thm}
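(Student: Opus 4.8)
The plan is to prove that $T\oplus T$ is hypercyclic by establishing that it is topologically transitive on $X\times X$; since $X$ is separable, so is $X\times X$, and by the Birkhoff Transitivity Theorem (already invoked above) topological transitivity is equivalent to hypercyclicity. Thus it suffices to show that for every pair of nonempty open sets in $X\times X$ some iterate of $T\oplus T$ carries a point of the first into the second.

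Because $X_0$ and $Y_0$ are dense in $X$, the products $X_0\times X_0$ and $Y_0\times Y_0$ are dense in $X\times X$, so it is enough to treat basic data: given $u_1,u_2\in X_0$, $v_1,v_2\in Y_0$, and $\varepsilon>0$, I would exhibit an index $n_k$ and a point $(a_1,a_2)$ with $\|a_i-u_i\|<\varepsilon$ and $\|T^{n_k}a_i-v_i\|<\varepsilon$ for $i=1,2$ simultaneously. The key device is the interpolating vector $a_i:=u_i+S^{n_k}v_i$.

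The computation then rests on condition (c). Since $S$ maps $Y_0$ into $Y_0$ and $TSy=y$ there, a routine induction gives $T^{n}S^{n}y=y$ for every $y\in Y_0$ and every $n$, whence $T^{n_k}a_i=T^{n_k}u_i+v_i$. Now (a) forces $T^{n_k}u_i\to 0$, so $T^{n_k}a_i\to v_i$, while (b) forces $S^{n_k}v_i\to 0$, so $a_i\to u_i$. Choosing $k$ large enough that all four quantities $\|S^{n_k}v_i\|$ and $\|T^{n_k}u_i\|$ (for $i=1,2$) fall below $\varepsilon$ then produces the desired point together with its iterate, establishing transitivity of $T\oplus T$.

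The only genuinely delicate point---and precisely the reason this argument yields \emph{weak} mixing rather than mere hypercyclicity of $T$---is that a \emph{single} index $n_k$ must serve both coordinates at once. This is automatic here because the sequence $\{n_k\}$ is fixed in advance and the limits in (a) and (b) hold coordinatewise, so one simply takes $k$ beyond the maximum of the four thresholds. A secondary item to verify is the identity $T^{n}S^{n}=\mathrm{id}$ on $Y_0$, which uses both that $S$ preserves $Y_0$ and the relation in (c); notably, no continuity or boundedness of $S$ is required, only its action on the dense set $Y_0$.
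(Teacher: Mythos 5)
Your proof is correct. The paper itself offers no proof of this statement --- it is quoted as a background result from \cite[Theorem 3.10]{GE} --- and your argument is essentially the standard one given in that reference: verify topological transitivity of $T\oplus T$ on the separable space $X\times X$ using the interpolating vectors $a_i=u_i+S^{n_k}v_i$, the identity $T^nS^n=\mathrm{id}$ on $Y_0$ (which, as you rightly note, needs only that $S$ preserves $Y_0$ together with condition (c), no continuity of $S$), and then conclude hypercyclicity of $T\oplus T$ via the Birkhoff Transitivity Theorem.
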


%@@@@@@@@@@@@@@@@@@
\subsection{Lineability and spaceability}
%@@@@@@@@@@@@@@@@@@
The following concepts of lineability and spaceability were introduced by Aron, Gurariy and Seoane Sepulveda in 2005 \cite{aron2005lineability}, and, since then, they have received a lot of attention and have been widely investigated.

\begin{defn}
Let $M \subseteq X$ and $\mu$ be a cardinal number. The set $M$ is said to be
\begin{itemize}
\item{\em{$\mu$-lineable}} if $M\cup \{0\}$ contains a vector space of dimension $\mu$;
\item{\em{$\mu$-spaceable}} if $M\cup \{0\}$ contains a closed vector space of dimension $\mu$;
\item{\em{$\mu$-dense-lineable}} if $M\cup \{0\}$ contains a dense vector space of dimension $\mu$.
\end{itemize}
\end{defn}

We refer to the set $M$ as {\em lineable, spaceable, dense-lineable,} if the respective existing subspace is infinite dimensional. The following diagram, showing the basic relations between the mentioned notions, cannot be improved, in the sense that none of the implications can be reverted \cite[Theorem 7.2.1]{aronlineability}. 

\begin{figure}[H]
\centering
\begin{tikzcd}[sep=small,arrows=Rightarrow]
& \text{{Spaceability}}\arrow[dr]  & &  \text{{Dense-lineability}}\arrow[dl]  &\\
 &   &  \text{Lineability} &  & \\
\end{tikzcd}
\end{figure}
The following propositions concern the relevant set of vectors $HC(T)$.

\begin{prop} \label{prop1}
If the operator $T$ is hypercyclic, then the set $HC(T)$ of hypercyclic vectors of $T$ is dense-lineable and, moreover, the dense subspace obtained is $T-$invariant \cite{Bernal, Bourdon, Herrero}.
\end{prop}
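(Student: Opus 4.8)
The plan is to exhibit a single dense, $T$-invariant subspace all of whose nonzero vectors are hypercyclic. Since $T$ is hypercyclic, I fix a hypercyclic vector $x_0 \in HC(T)$ and set
\[
M_0 = \{\, p(T)x_0 : p \in \mathbb{C}[z]\,\} = \mathrm{span}\,\{T^n x_0 : n \ge 0\}.
\]
This $M_0$ is manifestly a linear subspace, and it is $T$-invariant because $T(p(T)x_0) = (zp)(T)x_0 \in M_0$. It is dense since it contains the orbit $Orb(x_0,T)$, which is dense by the choice of $x_0$. Finally, $M_0$ is infinite-dimensional: if it were finite-dimensional it would be a proper closed subspace of the infinite-dimensional space $X$ containing the dense orbit, which is absurd. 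Thus the whole content of the statement reduces to the claim that every nonzero vector of $M_0$ is hypercyclic.

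For that, I would establish the two ingredients underlying the Bourdon–Herrero argument. First, for a nonzero polynomial $p$ one has $p(T)x_0 \neq 0$: otherwise the sequence $\{T^n x_0\}$ would satisfy a linear recurrence of order $\deg p$ and hence lie in the finite-dimensional space $\mathrm{span}\{x_0,\dots,T^{\deg p -1}x_0\}$, again contradicting its density. Second, and this is the key step, $p(T)$ has dense range for every nonzero $p$. Factoring $p(z)=c\prod_i (z-\lambda_i)$, it suffices to show each $T-\lambda I$ has dense range. If some $T-\lambda I$ did not, there would be a nonzero $\varphi \in X^*$ annihilating its range, i.e. $T^*\varphi = \lambda\varphi$; but a hypercyclic operator has no eigenvalues for its adjoint, since then $\{\langle T^n x_0,\varphi\rangle\}=\{\lambda^n\langle x_0,\varphi\rangle\}$ would have to be dense in $\mathbb{C}$, which no single geometric orbit $\{\lambda^n \zeta\}$ can be.

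With these in hand I would invoke the elementary transference principle: if $S$ is a bounded operator with dense range commuting with $T$, then $S$ maps $HC(T)$ into $HC(T)$. Indeed $Orb(Sx_0,T) = S(Orb(x_0,T))$, and the continuous image under $S$ of the dense set $Orb(x_0,T)$ is dense because $\overline{S(X)} = X$. Applying this with $S = p(T)$, which commutes with $T$ and, by the previous paragraph, has dense range, shows that $p(T)x_0 \in HC(T)$ for every nonzero $p$. Since every nonzero element of $M_0$ is of this form with $p \neq 0$, we obtain $M_0\setminus\{0\}\subseteq HC(T)$, so $M_0$ is the desired dense, $T$-invariant, infinite-dimensional subspace, completing the proof. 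The only genuinely delicate point is the absence of eigenvalues of $T^*$; everything else is formal, and it is precisely this spectral obstruction that supplies the dense range needed to transfer hypercyclicity along the orbit.
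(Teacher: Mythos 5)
Your proof is correct, and it is precisely the classical Herrero--Bourdon argument (take $M_0=\{p(T)x_0: p \in \mathbb{C}[z]\}$, show $p(T)$ has dense range because $T^*$ has no eigenvalues, then transfer hypercyclicity along the commuting dense-range operator $p(T)$) that the paper itself invokes without proof via the citations \cite{Bernal, Bourdon, Herrero}. All the delicate points --- the nonvanishing of $p(T)x_0$, the impossibility of $\{\lambda^n\zeta : n \ge 0\}$ being dense in $\mathbb{C}$, and the density of $S(Orb(x_0,T))$ for a dense-range $S$ commuting with $T$ --- are handled correctly, so there is nothing to add.
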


Let $\rm{Cof}=\{E \subseteq X : E \text{ subspace of finite codimension}\}$.  The following necessary and sufficient condition, involving $\rm{Cof}$, for $HC(T)$ to be spaceable is provided in \cite{Menet1}:

\begin{prop} \label{prop2} \cite[Corollary 2.1]{Menet1}
Let $T$ be a weakly mixing operator. The set $HC(T)$ is spaceable if and only if \[ \sup_{n \geq 1} \sup_{E \in \rm{Cof}} \inf_{x \in E\setminus \{0\}} \dfrac{\Vert T^nx \Vert}{\Vert x \Vert} \leq 1.\]
\end{prop}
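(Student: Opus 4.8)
The plan is to prove both implications of the characterization. Throughout I write $j_e(S) := \sup_{E \in \mathrm{Cof}} \inf_{x \in E \setminus \{0\}} \|Sx\|/\|x\|$ for the \emph{essential minimum modulus} of a bounded operator $S$, so that the stated condition reads $\sup_{n \ge 1} j_e(T^n) \le 1$. The two facts I would lean on are: (i) the elementary observation that $j_e(S) > 1$ holds exactly when $S$ is bounded below by some factor $c > 1$ on a finite-codimensional subspace $E$; and (ii) that a weakly mixing $T$ satisfies a hypercyclicity criterion along a suitable increasing sequence $(n_k)$ (B\`es--Peris), furnishing dense sets $X_0,Y_0$ and maps $S_{n_k}$ with $T^{n_k}\to 0$ on $X_0$, $S_{n_k}\to 0$ on $Y_0$ and $T^{n_k}S_{n_k}=\mathrm{id}$ on $Y_0$, in the spirit of the maps in Theorem~\ref{GS}. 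Fact (ii) is the engine that upgrades a subspace with controlled iterates into a subspace of hypercyclic vectors.

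For necessity I argue by contraposition: suppose $j_e(T^{n_0}) = c > 1$ for some $n_0$, and set $S := T^{n_0}$, which is bounded below by $c>1$ on a finite-codimensional $E$. By Ansari's theorem $HC(T)=HC(T^{n_0})=HC(S)$, so it suffices to show $S$ has no hypercyclic subspace. The key lemma I would establish is that uniform expansion of $S$ on the finite-codimensional $E$ forces \emph{every} infinite-dimensional closed subspace $M$ to contain a nonzero vector $x$ with $\inf_k \|S^k x\| > 0$; such an $x$ has non-dense $S$-orbit (it avoids a neighbourhood of $0$), hence $x \notin HC(S)=HC(T)$, contradicting $M\setminus\{0\}\subseteq HC(T)$. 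This lemma is proved by a gliding-hump construction inside $M$: since $M$ is infinite-dimensional one can select almost-normalized $y_i \in M$ living essentially on higher and higher ``blocks'' and form $x=\sum_i a_i y_i$ with geometrically decaying weights, arranged so that at each scale the expansion on $E$ keeps the orbit norm bounded below; the supermultiplicativity $j_e(S^k)\ge j_e(S)^k$ makes the growth overwhelming and drives the selection, while the finite codimension of $E$ is handled by a dimension-avoidance argument.

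For sufficiency I would build an infinite-dimensional closed subspace $M_0$ with $M_0\setminus\{0\}\subseteq HC(T)$ by inductively constructing a normalized basic sequence $(u_k)$ and setting $M_0=\overline{\mathrm{span}}\{u_k\}$. At stage $k$ I pick a large index $n_k$ from the hypercyclicity-criterion sequence and, using $j_e(T^{n_j})\le 1$ for the finitely many relevant powers together with the freedom to intersect finitely many kernels, I select a unit vector $u_k$ in a finite-codimensional subspace so that $(u_k)$ remains basic and almost biorthogonal to the earlier vectors, the diagonal terms satisfy $\|T^{n_j}u_k\|\le 1$ for $j\ge k$, and, placing $u_k$ near $X_0$, the off-diagonal terms satisfy $T^{n_j}u_k\to 0$ as $j\to\infty$ for each fixed $k$. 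Summing these estimates yields $T^{n_k}x\to 0$ for every $x\in M_0$, which is precisely the hypothesis of the Gonz\'alez--Le\'on-Saavedra--Montes-Rodr\'iguez / Menet criterion for the existence of a hypercyclic subspace; combined with weak mixing (the right inverses $S_{n_k}$) this gives $M_0\setminus\{0\}\subseteq HC(T)$, i.e.\ spaceability.

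The main obstacle is the bookkeeping in the sufficiency construction: the bound $j_e(T^n)\le 1$ only supplies unit vectors on which $T^{n}$ does not \emph{expand}, i.e.\ diagonal control of size $1$ rather than summable smallness, so obtaining $T^{n_k}x\to 0$ on all of the closed span $M_0$ (not merely on the algebraic span) requires a delicate interplay between these barely-controlled diagonal terms and the off-diagonal terms forced to $0$ by proximity to $X_0$. This is exactly where the threshold value $1$ is sharp, and it is the step I expect to demand the most care; by contrast, the gliding-hump lemma underpinning necessity is comparatively routine once correctly formulated.
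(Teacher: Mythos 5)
First, a point of reference: the paper does not prove this proposition at all --- it is imported verbatim from Menet \cite{Menet1}, so your attempt has to be measured against Menet's proof, whose structure is: a \emph{new} hypercyclic-subspace criterion (existence of an increasing sequence $(n_k)$ and a sequence of infinite-dimensional closed subspaces $M_k$, \emph{varying with} $k$, with a uniform bound $\|T^{n_k}x\|\le C\|x\|$ for $x\in M_k$) for the sufficiency half, plus a quantitative non-spaceability lemma for the necessity half.

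Your sufficiency argument has a fatal gap. You aim to build one fixed closed infinite-dimensional $M_0$ with $T^{n_k}x\to 0$ for every $x\in M_0$, i.e.\ to verify the Gonz\'alez--Le\'on-Saavedra--Montes-Rodr\'iguez criterion; but that criterion is strictly stronger than the hypothesis $\sup_{n}\sup_{E\in\mathrm{Cof}}\inf_{x\in E\setminus\{0\}}\|T^nx\|/\|x\|\le 1$, and indeed the whole point of \cite{Menet1} is that it can fail for operators (even weighted shifts) whose hypercyclic vectors are spaceable --- which is why Menet proves the weaker varying-subspace criterion above instead. Concretely, two steps of your construction are unjustified: (a) at stage $k$ you demand $\|T^{n_j}u_k\|\le 1$ for \emph{all} $j\ge k$, i.e.\ simultaneous control of infinitely many powers on a single vector, whereas the hypothesis only furnishes, for each \emph{single} $n$ and each finite-codimensional $E$, some unit $x\in E$ with $\|T^nx\|\le 1+\varepsilon$; it gives no vector good for even two powers at once. (b) The off-diagonal control ``by proximity to $X_0$'' collapses: writing $u_k=v_k+w_k$ with $v_k\in X_0$, the error terms obey only $\|T^{n_j}w_k\|\le\|T^{n_j}\|\,\|w_k\|$, and along the criterion sequence $\|T^{n_j}\|\to\infty$ (from $T^{n_j}S_{n_j}=\mathrm{id}$ on $Y_0$ and $S_{n_j}y\to 0$), so a perturbation of fixed size chosen at stage $k$ cannot stay small under all later powers. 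Note also that once one uses Menet's criterion, the threshold $1$ is not sharp where you locate it: any uniform constant $C$ suffices on the positive side; the value $1$ matters only in the necessity half, where $\sup>1$ yields geometric expansion.

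Your necessity half also does not work as described, and it is not ``comparatively routine.'' The supermultiplicativity $j_e(S^m)\ge j_e(S)^m$ and the Ansari reduction are fine, but the gliding hump fails: the certificate $\|S^jy_i\|\ge c^j\|y_i\|$ holds only while the earlier iterates of $y_i$ stay inside $E$, i.e.\ for $j$ up to a finite horizon $m_i$ (choose $y_i$ in the finite-codimensional $\bigcap_{j<m_i}S^{-j}(E)$); beyond that horizon the only available upper bound is $\|S^jy_i\|\le\|S\|^j\|y_i\|$, and since in general $\|S\|>c$, an \emph{expired} term can swamp the would-be dominant term's $c^j$ no matter how fast the weights decay --- dominance of term $i$ at time $j$ over an expired term $i'<i$ via the triangle inequality would force $|a_i|\ge |a_{i'}|(\|S\|/c)^{j}$ up to constants, incompatible with geometrically decaying coefficients. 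The actual proofs circumvent exactly this: Gonz\'alez--Le\'on-Saavedra--Montes-Rodr\'iguez use (in the complex case) a Riesz spectral decomposition splitting off a finite-dimensional part and working inside a $T$-\emph{invariant} subspace on which $T$ is uniformly expanding (invariance is what eliminates the expired-certificate problem), while Menet's Banach-space argument requires a much more careful selection, using functionals to annihilate the finitely many expired iterates at each time scale rather than crude norm estimates.
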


%%%%%%%%%%%%%%%%%%%%%%%%%%%%%%
\subsubsection{Lineability and spaceability of $HC(B_w)$}
As it is known, the set $HC(B_w)$ of a hypercyclic bilateral (unilateral) weighted shift $B_w$ is dense-lineable and, therefore, lineable (see (1) of Proposition \ref{prop1}). 
The next result investigates the spaceability for unilateral and bilateral weighted shifts.

\begin{thm} \cite[Theorem 3.3, Theorem 3.5]{Menet1} 
The followings hold.
\begin{enumerate}
\item The set $HC(B_w)$ of a hypercyclic unilateral weighted shift $B_w$ is spaceable if and only if \[ \sup_{n \geq 1} \inf_{k \geq 1} \prod_{\nu=1}^{n} \vert w_{\nu+k} \vert < \infty. \]
\item The set $HC(B_w)$ of a hypercyclic bilateral weighted shift $B_w$ is spaceable.
\end{enumerate}
\end{thm}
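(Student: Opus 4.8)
The plan is to derive both statements from the spaceability criterion of Proposition \ref{prop2}. Since that criterion requires $T$ to be weakly mixing, I would begin by recording the classical fact that every hypercyclic weighted backward shift is weakly mixing: testing Theorem \ref{GS} with $X_0 = Y_0$ equal to the finitely supported sequences, with $S$ the associated weighted forward shift (a right inverse of $B_w$, resp.\ $B_w^{-1}$ in the bilateral case), and with $\{n_k\}$ extracted from the hypercyclicity of $B_w$, all three hypotheses (a)--(c) are met. Thus Proposition \ref{prop2} applies and, writing $a_{n,k} := \prod_{\nu=1}^n |w_{\nu+k}|$, the whole problem reduces to computing the quantity $\sup_{E \in \mathrm{Cof}} \inf_{x \in E\setminus\{0\}} \Vert B_w^n x\Vert / \Vert x\Vert$ and comparing it with $1$.

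First I would make $B_w^n$ explicit: a direct computation gives $(B_w^n x)_j = \big(\prod_{\nu=1}^n w_{j+\nu}\big)\, x_{j+n}$, so that, after reindexing the input by $m = j+n$, the operator stretches the $m$-th coordinate by the factor $a_{n,\,m-n}$ and moves it to position $m-n$. The key claim is the identity
\[ \sup_{E \in \mathrm{Cof}} \ \inf_{x \in E\setminus\{0\}} \frac{\Vert B_w^n x\Vert}{\Vert x\Vert} \ = \ \liminf_{k} a_{n,k}, \]
where for the bilateral shift the $\liminf$ is understood as $|k|\to\infty$, i.e.\ as $\min\{\liminf_{k\to+\infty} a_{n,k},\ \liminf_{k\to-\infty} a_{n,k}\}$. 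The inequality ``$\ge$'' is obtained by testing on the coordinate tails $\overline{\mathrm{span}}\{e_m : m > N\}$, on which the ratio is minimized precisely at $\inf_{m>N} a_{n,\,m-n}$, and then letting $N\to\infty$. For the reverse inequality I would run a Weyl-type, dimension-counting argument: if $E$ has codimension $d$, then for every $\varepsilon>0$ the definition of $\liminf$ produces $d+1$ coordinates on which the stretch factor is below $\liminf_k a_{n,k}+\varepsilon$, and since $d+1$ exceeds the codimension, $E$ contains a nonzero vector supported on them, whose ratio is then $< \liminf_k a_{n,k}+\varepsilon$.

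With this identity, Proposition \ref{prop2} says that $HC(B_w)$ is spaceable if and only if $\sup_{n} \liminf_k a_{n,k} \le 1$, and it remains to see that this is equivalent to the stated condition. In the unilateral case the inequality $\inf_k a_{n,k} \le \liminf_k a_{n,k}$ makes one implication immediate (it even upgrades ``$<\infty$'' to ``$\le 1$''). For the converse I would use the window factorization $a_{n+m,k} = a_{n,k}\, a_{m,\,k+n}$ together with an averaging estimate: if $\sup_n \liminf_k a_{n,k}$ exceeded $1$, then for some fixed length $n_0$ all sufficiently far length-$n_0$ windows would have product $> L' > 1$, which forces the weights to be bounded below far out; tiling any long window by blocks of length $n_0$ (and bounding the finite, nonzero near part from below) would then make its product grow geometrically, so that $\inf_k a_{N,k} \to \infty$ and $\sup_n \inf_k a_{n,k} = \infty$. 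Contrapositively, $\sup_n \inf_k a_{n,k} < \infty$ yields $\sup_n \liminf_k a_{n,k} \le 1$, and the unilateral characterization follows.

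For the bilateral shift the identity above shows it suffices to prove $\liminf_{k\to-\infty} a_{n,k} \le 1$ for every $n$, with no extra hypothesis. This is exactly what the backward half of the Salas hypercyclicity criterion supplies: it provides windows receding to $-\infty$ whose products tend to $0$, and the same averaging estimate extracts from them length-$n$ subwindows at positions tending to $-\infty$ with product $\le 1$. Hence $\sup_n \liminf_{|k|\to\infty} a_{n,k} \le 1$ automatically, and $HC(B_w)$ is always spaceable. I expect the main obstacle to be twofold: carrying out the Weyl-type upper bound so as to guarantee that no non-coordinate cofinite subspace beats the coordinate tails, and making the averaging estimate quantitatively clean (controlling the boundary blocks in the tilings) so that ``arbitrarily long windows of bounded product'' is correctly converted into ``$\liminf \le 1$'' on each side.
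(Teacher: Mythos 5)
This theorem is quoted background in the paper: it is cited from \cite[Theorems 3.3 and 3.5]{Menet1} and no proof is given here, so there is no internal argument to compare yours against. Judged on its own terms, your reconstruction is sound, and it runs along the same lines as the cited source: weak mixing of hypercyclic shifts via Theorem \ref{GS} (with the formal weighted forward shift as right inverse on finitely supported sequences), then the spaceability criterion of Proposition \ref{prop2}, then an explicit evaluation of $\sup_{E\in\mathrm{Cof}}\inf_{x\in E\setminus\{0\}}\Vert B_w^n x\Vert/\Vert x\Vert$. Your key identity, with $a_{n,k}=\prod_{\nu=1}^n|w_{\nu+k}|$, is correct: the coordinate tails give the lower bound, and the codimension count gives the upper bound, since $d+1$ coordinates with near-liminf stretch factors span a subspace that must meet any codimension-$d$ subspace nontrivially, and disjointness of supports makes the ratio estimate immediate in $\ell^p$. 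The two delicate conversions are also handled correctly: (a) hypercyclicity forces every weight to be nonzero (a zero weight traps the orbit in a proper closed coordinate hyperplane), which is what legitimizes bounding your finite ``near part'' below by a positive constant; and (b) from $a_{n_0,k}>L'>1$ for all far $k$ together with $M=\sup_j|w_j|<\infty$ one gets $|w_j|\ge L'/M^{n_0-1}$ far out, so the boundary blocks of length $<n_0$ in your tiling are bounded below and $\inf_k a_{N,k}$ grows geometrically in $N$ --- exactly the estimate needed both for the unilateral equivalence ($\sup_n\inf_{k}a_{n,k}<\infty$ iff $\sup_n\liminf_k a_{n,k}\le 1$, which incidentally explains why ``$<\infty$'' in the statement self-improves to ``$\le 1$'') and, applied to the windows $a_{n_k,-n_k}\to 0$ receding to $-\infty$ supplied by the backward half of the bilateral hypercyclicity characterization, for the unconditional bilateral spaceability. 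Two small points to make explicit in a full write-up: the theorem also covers $c_0$, so the sup-norm analogue of your identity should be recorded (it holds verbatim), and in the unilateral case the tail subspaces should be taken with $N\ge n$ so that no coordinates are annihilated by $B_w^n$.
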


%%%%%%%%%%%%%%%%%%%%%%%%%%%%%%%%%%%%%%%%%%%%%%%%%%%%
\section{Lineability and spaceability {\bf of $HC(T_{f})$}}
%%%%%%%%%%%%%%%%%%%%%%%%%%%%%%%%%%%%%%%%%%%%%%%%%%%%
As for $HC(B_w)$, the set $HC(T_f)$ is dense-lineable and, therefore, lineable (see (1) of Proposition \ref{prop1}). The main result of the paper is the characterization of spaceability of $HC(T_f)$. In order to prove it, we need to investigate the role of weak mixing in the context of composition operators.
%%%%%%%%%%%%%%%%%%%
\subsection{Weak mixing for $T_f$}

\begin{thm} \label{weakmixTf}
Let $T_f$ be a dissipative composition operator of bounded distortion, generated by a wandering set $W$.
The followings are equivalent:
\begin{itemize}
\item[(i)] $T_f$ is hypercyclic;
\item[(ii)] $T_f$ is weakly mixing;
\item[(iii)] $\displaystyle{\exists \{n_k\}_{k \geq 1} :  \forall j \in \mathbb Z, \lim_{k \rightarrow + \infty } \dfrac{\mu(f^{j-n_k}(W))}{\mu(f^j(W))}=0 \ \text{and} \ \ \lim_{k \rightarrow + \infty } \dfrac{\mu(f^{j}(W))}{\mu(f^{j+n_k}(W))}=\infty.}$
\end{itemize}
\end{thm}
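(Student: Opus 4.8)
The plan is to establish the cycle $(iii)\Rightarrow(ii)\Rightarrow(i)\Rightarrow(iii)$. The middle implication is immediate, since weak mixing always implies hypercyclicity (as recalled just after the definition of weak mixing). The two substantial steps are the first and the last, and they are of opposite character: $(iii)\Rightarrow(ii)$ is a direct construction on $T_f$ itself, while $(i)\Rightarrow(iii)$ is most cleanly obtained by descending to the factor $B_w$ supplied by Lemma~\ref{factorBw}. Throughout I use that $\{f^j(W)\}_{j\in\Z}$ partitions $X$, so that $L^p(X)$ splits as an $\ell^p$--sum of the level spaces $L^p(f^j(W))$, and that $0<\mu(f^j(W))<\infty$ for every $j$.

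For $(iii)\Rightarrow(ii)$ I would apply the Gethner--Shapiro Criterion (Theorem~\ref{GS}). Take as dense sets $X_0=Y_0=D$, where $D$ is the linear span of the functions supported on finitely many of the levels $f^j(W)$; this is dense by the partition remark. As the required right inverse take $S=T_f^{-1}=T_{f^{-1}}$, which exists by the standing assumptions and maps $f^j(W)$ onto $f^{j+1}(W)$, so $S(D)\subseteq D$ and condition $(c)$, namely $T_fSy=y$, holds identically. The heart of the matter is the norm bookkeeping. For $\varphi$ supported on a single level $f^j(W)$ one has $T_f^n\varphi=\varphi\circ f^n$ supported on $f^{j-n}(W)$, and writing sets in $f^j(W)$ as $f^j(B)$ with $B\in\mathcal B(W)$ and invoking bounded distortion~\eqref{eq:conditionbd} at the two levels $j-n$ and $j$ yields the two--sided comparison
\begin{equation*}
\frac{1}{K^{2}}\,\frac{\mu(f^{\,j-n}(W))}{\mu(f^{\,j}(W))}\,\Vert\varphi\Vert_p^p
\;\le\;\Vert T_f^{\,n}\varphi\Vert_p^p
\;\le\;K^{2}\,\frac{\mu(f^{\,j-n}(W))}{\mu(f^{\,j}(W))}\,\Vert\varphi\Vert_p^p,
\end{equation*}
and symmetrically $\Vert S^{\,n}\varphi\Vert_p^p$ is comparable to $\big(\mu(f^{\,j+n}(W))/\mu(f^{\,j}(W))\big)\Vert\varphi\Vert_p^p$ within the same factor $K^{2}$. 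Along the sequence $\{n_k\}$ furnished by $(iii)$, the first limit forces $\Vert T_f^{\,n_k}\varphi\Vert_p\to0$ and the second forces $\Vert S^{\,n_k}\varphi\Vert_p\to0$; summing over the finitely many active levels upgrades $(a)$ and $(b)$ from single levels to all of $D$. Hence $T_f$ is weakly mixing.

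For $(i)\Rightarrow(iii)$ the idea is to transport hypercyclicity to the factor. By Lemma~\ref{factorBw}, $B_w$ is a factor of $T_f$ through the continuous surjection $\Pi$; since the continuous image of a dense orbit under a surjection is dense, hypercyclicity of $T_f$ forces hypercyclicity of the bilateral weighted shift $B_w$. By the Salas characterization of hypercyclic bilateral weighted shifts there is then an increasing sequence $\{n_k\}$ such that, for every $j\in\Z$, both $\Vert B_w^{\,n_k}e_j\Vert\to0$ and $\Vert B_w^{-n_k}e_j\Vert\to0$. With the explicit weights $w_{k}=(\mu(f^{k-1}(W))/\mu(f^{k}(W)))^{1/p}$ these two norms telescope to $\big(\mu(f^{\,j-n_k}(W))/\mu(f^{\,j}(W))\big)^{1/p}$ and $\big(\mu(f^{\,j+n_k}(W))/\mu(f^{\,j}(W))\big)^{1/p}$ respectively, so their vanishing becomes precisely the two limits in $(iii)$. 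This step is where I expect the main obstacle to lie: the real content is the production of a single sequence $\{n_k\}$ valid simultaneously for all $j\in\Z$, which the factor map offloads onto the known bilateral theory at the cost of checking that hypercyclicity descends and that the telescoping identifies the weight conditions with $(iii)$. If one prefers to avoid citing Salas, the same conclusion can be reached directly from a hypercyclic vector $u=\sum_j u_j$ of $T_f$ by approximating targets supported on a fixed level, reading off the decay of $\mu(f^{\,j-n}(W))/\mu(f^{\,j}(W))$ and the growth of $\mu(f^{\,j}(W))/\mu(f^{\,j+n}(W))$ from the norm estimates of the previous paragraph, and then diagonalizing over $j\in\Z$; that diagonalization is the delicate point of the alternative route.
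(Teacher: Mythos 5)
Your proposal is correct and follows essentially the same route as the paper: the cycle $(iii)\Rightarrow(ii)\Rightarrow(i)\Rightarrow(iii)$, with the Gethner--Shapiro Criterion applied to $T_f$ and $S=T_{f^{-1}}$ on a dense set of functions supported on finitely many levels $f^j(W)$, and the hypercyclicity of $T_f$ transported to the shift $B_w$ of Lemma~\ref{factorBw} (the paper cites Theorem~M of \cite{DAnielloDarjiMaiuriello2} where you use the factor map directly) followed by the Salas-type characterization from \cite[p.~102]{GE} and the telescoping of the weights. The only cosmetic difference is that in $(iii)\Rightarrow(ii)$ you invoke bounded distortion for a two-sided $K^2$-comparison, whereas the paper gets by with the one-sided inclusion bound $\mu(f^{-n_k}(B_i))\leq\mu(f^{j_i-n_k}(W))$ for simple functions, which suffices and does not use bounded distortion in that direction.
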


\begin{proof}
$(i)\Rightarrow (iii)$.\\
By \cite[Theorem M]{DAnielloDarjiMaiuriello2}, $T_f$ is hypercyclic if and only if the associated weighted backward shift $B_w$, that is the one with weights $w_{h} =  \left( \frac{\mu(f^{h-1}(W))}{\mu(f^{h}(W))}\right)^{\frac{1}{p}}$, is hypercyclic. By \cite[page 102]{GE}, as $B_w$ is hypercyclic, then
\[ \exists \{n_k\}_{k \geq 1} :  \forall j \in \mathbb Z, \lim_{k \rightarrow + \infty } \prod_{h=j-n_k+1}^j w_h =0 \ \& \ \ \lim_{k \rightarrow + \infty } \prod_{h=j+1}^{j+n_k} \vert w_h\vert =\infty.\]
Hence, $(iii)$ follows by substituting $w_{h} =  \left( \frac{\mu(f^{h-1}(W))}{\mu(f^{h}(W))}\right)^{\frac{1}{p}}$ in the above formula.\\
\noindent
$(iii)\Rightarrow (ii)$.\\
We now show that conditions $(a), (b), (c)$ of Theorem \ref{GS} are satisfied with $T=T_f$ and $S=T_{f^{-1}}$, implying that $T_f$ is weakly mixing.
By assumption, the composition operator $T_f$ is invertible with $T_f^{-1}=T_{f^{-1}}.$ Condition $(c)$ is trivially satisfied.\\
 In order to show that also $(a)$ and $(b)$ hold, in accordance with the notation of Theorem \ref{GS}, we let  
\[X_0=Y_0=\left \{\sum_i a_i \chi_{B_i} : a_i \in \mathbb C, B_i \subseteq f^{j_i}(W), j_i \in \mathbb Z, \mu(B_i)>0, B_i \cap B_{i'} = \emptyset, i \neq i' \right \},\]
 which is dense in $L^p(X)$. 
For each $\varphi \in X_0$,
\begin{eqnarray*}
\Vert T_f^{n_k} \varphi \Vert_p^p&=&\Vert \sum_i a_i \chi_{B_i} \circ f^{n_k} \Vert_p^p\\
&=&\sum_i \vert a_i \vert^p \int_X \vert \chi_{B_i} \circ f^{n_k} \vert^p d\mu\\
&=& \sum_i \vert a_i \vert^p \mu(f^{-n_k}(B_i)) \leq  \sum_i \vert a_i \vert^p \mu(f^{j_i-n_k}(W))\\
&=& \sum_i \vert a_i \vert^p \mu(f^{-n_k}(B_i)) \leq  \sum_i \vert a_i \vert^p \dfrac{\mu(f^{j_i-n_k}(W))}{f^{j_i}(W)}f^{j_i}(W)
\end{eqnarray*}
and
\begin{eqnarray*}
\Vert T_{f^{-1}}^{n_k} \varphi \Vert_p^p&=&\Vert \sum_i a_i \chi_{B_i} \circ f^{- n_k} \Vert_p^p\\
&=&\sum_i \vert a_i \vert^p \int_X \vert \chi_{B_i} \circ f^{- n_k} \vert^p d\mu\\
&=& \sum_i \vert a_i \vert^p \mu(f^{n_k}(B_i)) \leq  \sum_i \vert a_i \vert^p \mu(f^{j_i+n_k}(W))\\
&=& \sum_i \vert a_i \vert^p \mu(f^{n_k}(B_i)) \leq  \sum_i \vert a_i \vert^p \dfrac{\mu(f^{j_i+n_k}(W))}{f^{j_i}(W)}f^{j_i}(W).
\end{eqnarray*}
Hence, $(iii)$ implies both $\Vert T_f^{n_k} \varphi \Vert_p\rightarrow 0$ and $\Vert T_{f^{-1}}^{n_k} \varphi \Vert_p\rightarrow 0$ as $k \rightarrow + \infty$, meaning that $(a)$ and $(b)$ of Gethner-Shapiro Criterion are satisfied, and hence $T_f$ is weakly mixing, i.e., $(ii)$ holds.\\
\noindent
$(ii)\Rightarrow (i)$. \\
This is obvious.
\end{proof}

As the weighted backward shifts are properly contained in the class of composition operators, the previous result generalizes the equivalence between weak mixing and hypercyclicity (proved for shifts in  \cite[Proposition 4.13-(a)]{GE}) to this more general class.  As a consequence of the above theorem together with Lemma \ref{factorBw}, it turns out, as the next theorem shows, that dissipative composition operators of bounded distortion have a {\em{shifts-like behavior}} concerning weak mixing, extending so the result given in \cite[Theorem M]{DAnielloDarjiMaiuriello2}.

\begin{thm} \label{Tf}
Let $T_f$ be a dissipative composition operator of bounded distortion, generated by a wandering set $W$. Consider the weighted backward shift $B_w$ as in Lemma \ref{factorBw}.
Then, $T_f$ is weakly mixing if and only if $B_w$ is. 
\end{thm}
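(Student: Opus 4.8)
The plan is to assemble the statement entirely from equivalences already in hand, treating weak mixing as a bridge between the two hypercyclicity notions. The key observation is that both $T_f$ and $B_w$ belong to classes for which weak mixing and hypercyclicity coincide, so the desired equivalence reduces to the known transfer of hypercyclicity between $T_f$ and its factor $B_w$. No fresh computation should be needed beyond what was already done for Theorem \ref{weakmixTf}.

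Concretely, I would argue through the following chain of biconditionals. First, by Theorem \ref{weakmixTf} (equivalence of (i) and (ii)), $T_f$ is weakly mixing if and only if $T_f$ is hypercyclic. Second, by \cite[Theorem M]{DAnielloDarjiMaiuriello2}---the same result invoked in the proof of Theorem \ref{weakmixTf}---$T_f$ is hypercyclic if and only if the associated weighted backward shift $B_w$ of Lemma \ref{factorBw} is hypercyclic. Third, since $B_w$ is a bilateral weighted shift, \cite[Proposition 4.13-(a)]{GE} gives that $B_w$ is hypercyclic if and only if $B_w$ is weakly mixing. Concatenating these three equivalences yields exactly the claim.

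I would also like to comment on the two directions separately, since one of them is essentially \emph{free}. Because $B_w$ is a factor of $T_f$ via the bounded, surjective, linear factor map $\Pi$ of Lemma \ref{factorBw}, and weak mixing is inherited by factors, the implication ``$T_f$ weakly mixing $\Rightarrow$ $B_w$ weakly mixing'' holds without appealing to the shift-specific equivalence: the map $\Pi \oplus \Pi$ is bounded, surjective, and intertwines $T_f \oplus T_f$ with $B_w \oplus B_w$, so the image of a hypercyclic vector for $T_f \oplus T_f$ is a hypercyclic vector for $B_w \oplus B_w$. The content therefore lies in the reverse implication, where the factor structure alone is insufficient: there one genuinely needs the coincidence of weak mixing and hypercyclicity on both sides together with the hypercyclicity transfer of \cite[Theorem M]{DAnielloDarjiMaiuriello2}.

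The main (and essentially only) obstacle is bookkeeping: verifying that each cited equivalence applies verbatim to the operators at hand. In particular, I would check that the weights of $B_w$ are exactly those prescribed in Lemma \ref{factorBw}, namely $w_{k} = \left( \frac{\mu(f^{k-1}(W))}{\mu(f^{k}(W))}\right)^{1/p}$, so that the hypothesis of \cite[Theorem M]{DAnielloDarjiMaiuriello2} is met, and that $B_w$ is a genuine bilateral weighted shift on $\ell^p(\Z)$, so that \cite[Proposition 4.13-(a)]{GE} is indeed applicable.
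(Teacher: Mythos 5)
Your proof is correct and follows essentially the same route as the paper: the paper's proof is exactly your chain of three equivalences, combining Theorem \ref{weakmixTf} for $T_f$, \cite[Proposition 4.13-(a)]{GE} for $B_w$, and the hypercyclicity transfer of \cite[Theorem M]{DAnielloDarjiMaiuriello2}. Your additional observation that the forward implication already follows from the factor map $\Pi$ of Lemma \ref{factorBw} (via $\Pi \oplus \Pi$) is a correct and nice aside, but it does not change the argument.
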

\begin{proof}
As weak mixing and hypercyclicity are equivalent notions for $T_f$ (Theorem \ref{weakmixTf}) and $B_w$ (\cite[Proposition 4.13-(a)]{GE}), then the thesis follows by applying  \cite[Theorem M]{DAnielloDarjiMaiuriello2}.
\end{proof}

Now, we have almost all the tools to investigate $HC(T_f)$.

%%%%%%%%%%%%%%%%%%%%%%%%%%%%%
\subsection{Spaceability of $HC(T_f)$}
 In order to analyze the spaceability of  $HC(T_f)$, we need some auxiliary results. 

\begin{lem} \label{conditionmix}
Let $T_f$ be a dissipative composition operator, generated by a wandering set $W$. Consider the following statements:
\begin{itemize}
\item[(i)] $\displaystyle{ \sup_{n \geq 1} \inf_{k \in \mathbb Z}\left ( \dfrac{\mu(f^k(W))}{\mu(f^{k+n}(W))} \right )^{\frac{1}{p}} \leq 1,}$
\item[(ii)] $\displaystyle{ \sup_{n \geq 1} \sup_{E \in \rm{Cof}} \inf_{\varphi \in E\setminus \{0\}} \dfrac{\Vert T_f^n \varphi \Vert_p}{\Vert \varphi \Vert_p} \leq 1.}$
\end{itemize}
Then $(i)$ implies $(ii)$.
\end{lem}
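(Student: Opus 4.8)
The plan is to verify (ii) by estimating the quantity appearing in it directly, reducing the matter to a codimension count against a well-chosen finite family of test vectors. Since (ii) is a supremum over $n\ge1$ and over $E\in\mathrm{Cof}$, it suffices to fix $n$ and a subspace $E$ of codimension $d$ and to prove $\inf_{\varphi\in E\setminus\{0\}}\Vert T_f^n\varphi\Vert_p/\Vert\varphi\Vert_p\le1$. The natural test vectors are the whole-piece indicators $e_l:=\chi_{f^l(W)}$; for distinct indices these are disjointly supported because $X=\dot\bigcup_k f^k(W)$, and one computes $T_f^n e_l=\chi_{f^{l-n}(W)}=e_{l-n}$. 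Writing $\beta_l:=\mu(f^{l-n}(W))/\mu(f^l(W))$ (finite and positive, which the ratios in (i) tacitly require and which propagates across pieces from ($\star$) for $f$ and $f^{-1}$), disjointness yields, for any finite combination $\varphi=\sum_i c_i e_{l_i}$,
\[
\frac{\Vert T_f^n\varphi\Vert_p^p}{\Vert\varphi\Vert_p^p}=\frac{\sum_i|c_i|^p\,\beta_{l_i}\,\mu(f^{l_i}(W))}{\sum_i|c_i|^p\,\mu(f^{l_i}(W))}\le\max_i\beta_{l_i}.
\]
Hence any $(d{+}1)$-dimensional span of such indicators with indices satisfying $\beta_{l_i}<1+\varepsilon$ meets the codimension-$d$ space $E$ in a nonzero vector whose quotient is at most $(1+\varepsilon)^{1/p}$.

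The crux, which I expect to be the main obstacle, is then to extract, for each fixed $n$ and each $\varepsilon>0$, \emph{infinitely many} indices $l$ with $\beta_l<1+\varepsilon$ (so that $d+1$ of them are available for every $d$). This is precisely where the absence of bounded distortion is felt: without ($\Diamond$) there is no pointwise control of $T_f^n$ on measurable subsets of a single piece, so one cannot produce many good test vectors by subdividing one good piece, and is forced to find many good pieces. I would argue by contradiction. If only finitely many $l$ satisfied $\beta_l<1+\varepsilon_0$, then $\beta_l\ge1+\varepsilon_0$ for all $|l|>M$, and telescoping along the arithmetic progression of step $n$ gives
\[
\frac{\mu(f^k(W))}{\mu(f^{k+Kn}(W))}=\prod_{j=1}^{K}\beta_{k+jn}.
\]
Among these $K$ factors, at most $b:=\lfloor 2M/n\rfloor+1$ have index in $[-M,M]$ and may violate the bound $1+\varepsilon_0$, while each factor is still at least $(c')^{-n}>0$, where $c'$ is the constant in ($\star$) for $f^{-1}$. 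Thus the product is at least $C(1+\varepsilon_0)^K$ for a constant $C>0$ independent of $k$, whence $\inf_k \mu(f^k(W))/\mu(f^{k+Kn}(W))\to\infty$ as $K\to\infty$, contradicting (i) applied with $Kn$ in place of $n$.

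With the infinitude of near-optimal indices established, I would conclude as announced: for given $n$, $E$, $\varepsilon$ pick distinct $l_1,\dots,l_{d+1}$ with $\beta_{l_i}<1+\varepsilon$, use that the $e_{l_i}$ are linearly independent (disjoint supports of positive measure) to intersect their span with $E$, obtain a nonzero $\varphi\in E$ with $\Vert T_f^n\varphi\Vert_p/\Vert\varphi\Vert_p\le(1+\varepsilon)^{1/p}$, and let $\varepsilon\to0$. Taking suprema over $E\in\mathrm{Cof}$ and then over $n$ delivers (ii). I note in passing that if $\mu$ were assumed non-atomic one could bypass the extraction step, since a single near-optimal piece would already support an infinite-dimensional family of good vectors concentrated on the set where $d(\mu\circ f^{-n})/d\mu$ lies below its piece-average $\beta_{l_0}$; the telescoping argument is preferable because it makes no atomlessness assumption and so covers the weighted-shift case uniformly.
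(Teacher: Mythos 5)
Your proposal is correct, and its skeleton is the same as the paper's: test vectors $\chi_{f^l(W)}$, the exact computation $\Vert T_f^n\varphi\Vert_p^p=\sum_i|c_i|^p\mu(f^{l_i-n}(W))$ via disjointness of the pieces, and a dimension count intersecting a $(d{+}1)$-dimensional span of such indicators with the codimension-$d$ subspace $E$. Where you genuinely depart from the paper is precisely the step you single out as the crux, and here your version is the more rigorous one. The paper extracts from (i) only the statement $(\bullet)$ --- for each $n$ there exists \emph{some} $k$ with $\mu(f^k(W))\le\mu(f^{k+n}(W))$ --- then fixes \emph{arbitrary} distinct indices $k_1,\dots,k_{m+1}$ and ``applies $(\bullet)$'' to bound $\mu(f^{k_i-n}(W))\le\mu(f^{k_i}(W))$ for every $i$; as written this invokes a universally quantified inequality that hypothesis (i) does not supply (and it also glosses over the fact that the infimum in (i) need not be attained, so even the single good index only comes with an $\varepsilon$). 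Your telescoping lemma fills exactly this gap: assuming $\beta_l\ge1+\varepsilon_0$ for all $|l|>M$, the identity $\mu(f^k(W))/\mu(f^{k+Kn}(W))=\prod_{j=1}^K\beta_{k+jn}$, the lower bound $\beta_l\ge(c')^{-n}$ from ($\star$) for $f^{-1}$ (a standing assumption of the paper, so legitimately available) on the at most $\lfloor 2M/n\rfloor+1$ exceptional factors, and (i) applied at $Kn$ yield a contradiction, so for each $n$ and $\varepsilon>0$ there are infinitely many indices with $\beta_l<1+\varepsilon$ --- hence $d+1$ of them for every codimension $d$, with (ii) recovered by letting $\varepsilon\to0$. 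The trade-off is clear: the paper's proof is shorter but incomplete at its key step, while yours pays the price of the $\varepsilon$-bookkeeping and the arithmetic-progression argument to make the selection of good indices actually follow from (i); your closing observation that atomlessness would allow subdividing a single good piece, but that the telescoping route covers the atomic (shift) case uniformly, is also apt.
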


\begin{proof}
%$(i) \Rightarrow (ii)$\\
By hypothesis, for each $n \geq 1 $ there exists $k \in \mathbb Z$ such that \[\left (\dfrac{\mu(f^k(W))}{\mu(f^{k+n}(W))}  \right )^{\frac{1}{p}} \leq 1. \tag{$\bullet$}\]
Let $E \in \rm{Cof}$, i.e., $E$ is a cofinite subspace of $L^p(X)$, meaning that $L^p(X)=E \oplus F$ where $\rm{dim}(F)=m < \infty$. Hence, for each $\varphi_1, \varphi_2, ..., \varphi_{m+1} \in E$ linearly independent, there exist $a_1, a_2, ..., a_{m+1}$ scalars such that $a_1 \varphi_1 + ...+ a_{m+1}\varphi_{m+1} \in E \setminus \{0\}$.
Fix any $k_{1}$, $\dots$, $k_{m+1}$ in $\mathbb Z$, all distinct, and take $a_1, a_2, ..., a_{m+1}$ such that
 \[\varphi= a_1 \chi_{f^{k_1}(W)} + a_2 \chi_{f^{k_2}(W)}+...+ a_{m+1} \chi_{f^{k_{m+1}}(W)} \in E \setminus \{0\},\]
where, by definition of a dissipative composition operator, the $f^{k_i}(W)$'s are all pairwise disjoint.\\
Note that, for each $n \geq 1$,
\begin{eqnarray*}
\Vert T_f^n \varphi \Vert_p^p&=& \Vert (a_1 \chi_{f^{k_1}(W)} +...+ a_{m+1} \chi_{f^{k_{m+1}}(W)}) \circ f^n \Vert_p^p \\
&=& \vert a_1\vert^p \int_X\vert  \chi_{f^{k_1}(W)} \circ f^n \vert^p d\mu + ...+ \vert a_{m+1} \vert^p \int_X \vert \chi_{f^{k_{m+1}}(W)} \circ f^n \vert ^p d\mu\\ 
&=& \vert a_1\vert^p \mu({f^{k_1-n}(W)}) + ...+ \vert a_{m+1} \vert^p \mu({f^{k_{m+1}-n}(W)})   \ \ \ (\text{apply } (\bullet))\\
&\leq & \vert a_1\vert^p \mu({f^{k_1}(W)}) + ...+ \vert a_{m+1} \vert^p \mu({f^{k_{m+1}}(W)}) \\
&=& \Vert \varphi \Vert _p^p.
\end{eqnarray*}
Hence, \[\dfrac{\Vert T_f^n\varphi \Vert_p}{\Vert \varphi \Vert _p} \leq 1, \forall n \geq 1 .\]
The arbitrariness of $E \in \rm{Cof}$ implies that 
\[ \sup_{n \geq 1} \sup_{E \in \rm{Cof}} \inf_{\varphi \in E\setminus \{0\}} \dfrac{\Vert T_f^n \varphi \Vert_p}{\Vert \varphi \Vert_p} \leq 1.\]
That is, $(ii)$ holds.
\end{proof}

\begin{thm}
The set $HC(T_f)$ of a hypercyclic dissipative composition operator of bounded distortion $T_f$ is spaceable.
\end{thm}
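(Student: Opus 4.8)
The plan is to verify Menet's spaceability criterion (Proposition \ref{prop2}) directly for $T_f$. Since $T_f$ is hypercyclic, Theorem \ref{weakmixTf} guarantees that it is weakly mixing, so the hypothesis of Proposition \ref{prop2} is met, and $HC(T_f)$ is spaceable precisely when
\[ \sup_{n \geq 1} \sup_{E \in \rm{Cof}} \inf_{\varphi \in E \setminus \{0\}} \dfrac{\Vert T_f^n \varphi \Vert_p}{\Vert \varphi \Vert_p} \leq 1, \]
which is exactly condition (ii) of Lemma \ref{conditionmix}. By that lemma it suffices to verify condition (i), namely
\[ \sup_{n \geq 1} \inf_{k \in \mathbb Z} \left( \dfrac{\mu(f^k(W))}{\mu(f^{k+n}(W))} \right)^{\frac{1}{p}} \leq 1. \]
Thus the entire problem reduces to establishing this single inequality, using only hypercyclicity.

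Writing $a_k := \mu(f^k(W)) > 0$, condition (i) amounts to $\inf_{k \in \mathbb Z} a_k/a_{k+n} \leq 1$ for every fixed $n \geq 1$. Since $T_f$ is hypercyclic, Theorem \ref{weakmixTf} supplies an increasing sequence $\{n_k\}_{k \geq 1}$ of positive integers with $a_{j-n_k}/a_j \to 0$ and $a_j/a_{j+n_k} \to +\infty$ for every $j \in \mathbb Z$. I would argue by contradiction. If condition (i) fails, there exist $n_0 \geq 1$ and $L > 1$ with $a_k/a_{k+n_0} \geq L$ for all $k$, equivalently $a_{k-n_0} \geq L\,a_k$, so a backward step of size $n_0$ multiplies the weights by at least $L$; iterating gives $a_{k - m n_0} \geq L^m a_k$ for all $m \geq 0$. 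Fixing $j = 0$ and writing $n_k = q_k n_0 + r_k$ with $0 \leq r_k < n_0$ (so $q_k \to +\infty$), and setting $c := \min_{0 \leq r < n_0} a_{-r} > 0$, the growth estimate yields
\[ a_{-n_k} = a_{-r_k - q_k n_0} \geq L^{q_k} a_{-r_k} \geq L^{q_k} c \to +\infty \quad (k \to +\infty). \]
Hence $a_{-n_k}/a_0 \to +\infty$, contradicting $a_{-n_k}/a_0 \to 0$ from Theorem \ref{weakmixTf}(iii) at $j = 0$. Therefore condition (i) holds.

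With condition (i) established, Lemma \ref{conditionmix} delivers condition (ii), that is, the inequality in Menet's criterion; since $T_f$ is weakly mixing, Proposition \ref{prop2} then concludes that $HC(T_f)$ is spaceable.

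I expect the reduction to be routine once the machinery is invoked, and the genuine obstacle to be the verification of condition (i) \emph{for every} step $n \geq 1$: the hypercyclicity hypothesis (iii) controls the weights only along the mixing sequence $\{n_k\}$, whereas condition (i) quantifies over all steps. The contradiction argument above is exactly what bridges this gap, turning a hypothetical uniform geometric decay over steps of fixed size $n_0$ into unbounded backward growth of the weights, which is incompatible with the first limit in (iii).
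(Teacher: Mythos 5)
Your proposal is correct and takes essentially the same route as the paper: reduce via Theorem \ref{weakmixTf}, Proposition \ref{prop2} and Lemma \ref{conditionmix} to condition (i) of that lemma, then refute its failure by iterating the uniform ratio bound along $n_k = q_k n_0 + r_k$ until it contradicts the first limit in condition (iii). The only (minor, and arguably cleaner) difference is in handling the remainder: the paper pads the leftover steps using condition $(\star)$, producing the factor $1/c^{n+q}$, whereas you bound $a_{-r_k}$ below by the minimum of the finitely many positive values $a_{-r}$, $0 \leq r < n_0$, which avoids invoking $(\star)$ and also deals transparently with the $k$-dependence of the remainder.
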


\begin{proof}
Let $T_f$ be hypercyclic. This is equivalent, by Theorem \ref{weakmixTf}, to the existence of an increasing sequence $\{n_k\}_{k \geq 1}$ such that for all $j \in \mathbb Z,$ 
\[ \lim_{k \rightarrow + \infty } \dfrac{\mu(f^{j-n_k}(W))}{\mu(f^j(W))}=0. \tag{$\spadesuit$}\]
Assume, by contradiction, that $HC(T_f)$ is not spaceable. By Proposition \ref{prop2}, it follows that
\[ \sup_{n \in \mathbb N} \sup_{E \in \rm{Cof}} \inf_{\varphi \in E \setminus \{0\}} \dfrac{\Vert T_f^n \varphi \Vert_p}{\Vert \varphi \Vert_p} >1\] 
implying, by Lemma \ref{conditionmix}, that
\[ \sup_{n \geq 1} \inf_{k \in \mathbb Z}\left ( \dfrac{\mu(f^k(W))}{\mu(f^{k+n}(W))} \right )^{\frac{1}{p}} > 1.\]
Hence, there exist $n \geq 1, C>1$ such that, for each $k \in \mathbb Z$, 
\[\dfrac{\mu(f^k(W))}{\mu(f^{k+n}(W))}>C^p>1.\]
Note that, by condition $(\star)$, $\dfrac{\mu(f^k(W))}{\mu(W)} \leq c^{k}$ for each $k \in \mathbb Z$.\\
Writing $n_k=m_kn + q$, with $0 \leq q \leq n-1$,  then
\begin{eqnarray*}
\dfrac{\mu(f^{j-n_k}(W))}{\mu(f^j(W))} &=& \dfrac{\mu(f^{j-m_kn-q}(W))}{\mu(f^j(W))}\\
&=&\dfrac{\prod_{h=1}^{m_k} \prod_{\nu=1}^n \frac{\mu(f^{j-hn+(\nu-1)}(W))}{\mu(f^{j-hn+\nu}(W))} }{\prod_{l=1}^q\frac{\mu(f^{j-m_kn-l+1}(W))}{\mu(f^{j-m_kn-l}(W))}}\\
&= &\left[ \dfrac{\mu(f^{j-m_kn-q}(W))}{\mu(f^{j-m_kn-q+1}(W))} \cdot  \dfrac{\mu(f^{j-m_kn-q+1}(W))}{\mu(f^{j-m_kn-q+2}(W))} \cdot \ldots \cdot \dfrac{\mu(f^{j-m_kn-q + q -1}(W))}{\mu(f^{j-m_kn - q + q}(W))}\right]\\
& \ \  & \cdot \left[\dfrac{\mu(f^{j-m_kn}(W))}{\mu(f^{j-m_kn+1}(W))} \cdot  \ldots \cdot  \dfrac{\mu(f^{j-m_kn+n-1}(W))}{\mu(f^{j-m_kn+n}(W))}\right] \\
& \ \  & \cdot \dfrac{\mu(f^{j-m_kn+n}(W))}{\mu(f^{j-m_kn+n+1}(W))}\cdot \ldots \cdot  \dfrac{\mu(f^{j-1}(W))}{\mu(f^{j}(W))} \\
&\geq & \dfrac{1}{c^{n+q}} \cdot  \dfrac{\mu(f^{j-m_kn+n}(W))}{\mu(f^{j-m_kn+n+1}(W))} \cdot \ldots \cdot  \dfrac{\mu(f^{j-1}(W))}{\mu(f^{j}(W))} \\
&=& \dfrac{1}{c^{n+q}} \cdot  \dfrac{\mu(f^{j-m_kn+n}(W))}{\mu(f^{j-m_kn+2n}(W))} \cdot \dfrac{\mu(f^{j-m_kn+2n}(W))}{\mu(f^{j-m_kn+3n}(W))}  \ldots \cdot  \dfrac{\mu(f^{j-m_kn+(m_k-1)n}(W))}{\mu(f^{j-m_kn+m_kn}(W))} \\
&=& \dfrac{1}{c^{n+q}} \cdot  \dfrac{\mu(f^{j-m_kn+n}(W))}{\mu(f^{j-m_kn+2n}(W))} \cdot \dfrac{\mu(f^{j-m_kn+2n}(W))}{\mu(f^{j-m_kn+3n}(W))}  \ldots \cdot  \dfrac{\mu(f^{j-n}(W))}{\mu(f^{j}(W))} \\
&\geq & \dfrac{1}{c^{n+q}} C^{p(m_k+1)}.
\end{eqnarray*}
This is in contradiction with $(\spadesuit)$, thus implying that $HC(T_f)$ has to be spaceable.
\end{proof}

\section*{Acknowledgement}
This research has been partially supported by the project ``Vain-Hopes'' within the program Valere of Università degli Studi della Campania ``Luigi Vanvitelli''. It has been partially accomplished within the UMI Group TAA ``Approximation Theory and Applications''.

\bibliographystyle{siam}
\bibliography{biblio}

\begin{thebibliography}{10}

\bibitem{aron2005lineability}
{\sc R.~Aron, V.~I. Gurariy, and J.~B. Seoane}, {\em Lineability and
  spaceability of sets of functions on {$\Bbb R$}}, Proc. Amer. Math. Soc., 133
  (2005), pp.~795--803.

\bibitem{aronlineability}
{\sc R.~M. Aron, L.~Bernal~Gonz\'{a}lez, D.~M. Pellegrino, and J.~B.
  Seoane~Sep\'{u}lveda}, {\em Lineability: the search for linearity in
  mathematics}, Monographs and Research Notes in Mathematics, CRC Press, Boca
  Raton, FL, 2016.

\bibitem{aron2009dense}
{\sc R.~M. Aron, F.~J. Garc\'{\i}a-Pacheco, D.~P\'{e}rez-Garc\'{\i}a, and J.~B.
  Seoane-Sep\'{u}lveda}, {\em On dense-lineability of sets of functions on
  {$\Bbb R$}}, Topology, 48 (2009), pp.~149--156.

\bibitem{Bernal}
{\sc L.~Bernal-Gonz\'{a}lez and M.~Ord\'{o}\~{n}ez Cabrera}, {\em Lineability
  criteria, with applications}, J. Funct. Anal., 266 (2014), pp.~3997--4025.

\bibitem{Bes}
{\sc J.~P. B\`es}, {\em Invariant manifolds of hypercyclic vectors for the real
  scalar case}, Proc. Amer. Math. Soc., 127 (1999), pp.~1801--1804.

\bibitem{Bourdon}
{\sc P.~S. Bourdon}, {\em Invariant manifolds of hypercyclic vectors}, Proc.
  Amer. Math. Soc., 118 (1993), pp.~845--847.

\bibitem{DAnielloDarjiMaiuriello}
{\sc E.~D'Aniello, U.~B. Darji, and M.~Maiuriello}, {\em Generalized
  hyperbolicity and shadowing in {$L^p$} spaces}, J. Differential Equations,
  298 (2021), pp.~68--94.

\bibitem{DAnielloDarjiMaiuriello2}
\leavevmode\vrule height 2pt depth -1.6pt width 23pt, {\em Shift-like operators
  on {$L^p(X)$}}, J. Math. Anal. Appl., 515 (2022), pp.~Paper No. 126393, 13.

\bibitem{GE}
{\sc K.-G. Grosse-Erdmann and A.~Peris~Manguillot}, {\em Linear chaos},
  Universitext, Springer, London, 2011.

\bibitem{Herrero}
{\sc D.~A. Herrero}, {\em Limits of hypercyclic and supercyclic operators}, J.
  Funct. Anal., 99 (1991), pp.~179--190.

\bibitem{Kitai}
{\sc C.~Kitai}, {\em Invariant closed sets for linear operators}, ProQuest LLC,
  Ann Arbor, MI, 1982.
\newblock Thesis (Ph.D.)--University of Toronto (Canada).

\bibitem{Maiuriello}
{\sc M.~Maiuriello}, {\em Dynamics of Linear Operators}, Aracne, ISBN:
  979–12–218–0131–6, 2022.

\bibitem{Menet1}
{\sc Q.~Menet}, {\em Hypercyclic subspaces and weighted shifts}, Adv. Math.,
  255 (2014), pp.~305--337.

\bibitem{Montes}
{\sc A.~Montes-Rodr\'{\i}guez}, {\em Banach spaces of hypercyclic vectors},
  Michigan Math. J., 43 (1996), pp.~419--436.

\bibitem{Rolewicz}
{\sc S.~Rolewicz}, {\em On orbits of elements}, Studia Math., 32 (1969),
  pp.~17--22.

\bibitem{Wengenroth}
{\sc J.~Wengenroth}, {\em Hypercyclic operators on non-locally convex spaces},
  Proc. Amer. Math. Soc., 131 (2003), pp.~1759--1761.

\end{thebibliography}

\Addresses

\end{document}